\newtheorem{theorem}{Theorem}
\theoremstyle{plain}
\newtheorem{corollary}{Corollary}
\numberwithin{equation}{section}
\begin{document}
\title[On new inequalities]{On fractional inequalities via Montgomery
identities integrals }
\author{Mehmet Zeki Sar\i kaya$^{\star }$}
\address{Department of Mathematics, Faculty of Science and Arts, D\"{u}zce
University, D\"{u}zce, Turkey}
\email{sarikayamz@gmail.com}
\thanks{$^{\star }$corresponding author}
\author{Hatice YALDIZ}
\email{yaldizhatice@gmail.com}
\author{Erhan SET}
\email{erhanset@yahoo.com}
\subjclass[2000]{ 26D15, 41A55, 26D10 }
\keywords{Riemann-Liouville fractional integral, Ostrowski inequality.}

\begin{abstract}
In the present work we give several new integral inequalities of the type
Riemann-Liouville fractional integral via Montgomery identities integrals.
\end{abstract}

\maketitle

\section{Introduction}

The inequality of Ostrowski \cite{Ostrowski} gives us an estimate for the
deviation of the values of a smooth function from its mean value. More
precisely, if $f:[a,b]\rightarrow \mathbb{R}$ is a differentiable function
with bounded derivative, then%
\begin{equation*}
\left\vert f(x)-\frac{1}{b-a}\int\limits_{a}^{b}f(t)dt\right\vert \leq \left[
\frac{1}{4}+\frac{(x-\frac{a+b}{2})^{2}}{(b-a)^{2}}\right] (b-a)\left\Vert
f^{\prime }\right\Vert _{\infty }
\end{equation*}
for every $x\in \lbrack a,b]$. Moreover the constant $1/4$ is the best
possible.

For some generalizations of this classic fact see the book \cite[p.468-484]%
{mitrovich} by Mitrinovic, Pecaric and Fink. A simple proof of this fact can
be \ done by using the following identity \cite{mitrovich}:

If $f:[a,b]\rightarrow \mathbb{R}$ is differentiable on $[a,b]$ with the
first derivative $f^{\prime }$ integrable on $[a,b],$ then Montgomery
identity holds:%
\begin{equation}
f(t)=\frac{1}{b-a}\int\limits_{a}^{b}f(s)ds+\int\limits_{a}^{b}P(t,s)f^{%
\prime }(s)ds,  \label{h}
\end{equation}%
where $P(x,t)$ is the Peano kernel defined by%
\begin{equation}
P(t,s):=\left\{ 
\begin{array}{ll}
\dfrac{s-a}{b-a}, & a\leq s<t \\ 
&  \\ 
\dfrac{s-b}{b-a}, & t\leq s\leq b.%
\end{array}%
\right.  \label{hh}
\end{equation}%
Suppose now that $w:\left[ a,b\right] \rightarrow \lbrack 0,\infty )$ is
some probability density function, i.e. it is a positive integrable function
satisfying $\int_{a}^{b}w\left( t\right) dt=1$,\ and $W\left( t\right)
=\int_{a}^{t}w\left( x\right) dx$ for $t\in \left[ a,b\right] $, $W\left(
t\right) =0$ for $t<a$ and $W\left( t\right) =1$ for $t>b$. The following
identity (given by Pe\v{c}ari\'{c} in \cite{pecaric}) is the weighted
generalization of the Montgomery identity:%
\begin{equation}
f\left( x\right) =\int\limits_{a}^{b}w\left( t\right) f\left( t\right)
dt+\int\limits_{a}^{b}P_{w}\left( x,t\right) f^{^{\prime }}\left( t\right) dt%
\text{,}  \label{hhh}
\end{equation}%
where the weighted Peano kernel is%
\begin{equation*}
P_{w}(x,t):=\left\{ 
\begin{array}{ll}
W\left( t\right) , & a\leq t<x \\ 
&  \\ 
W\left( t\right) -1, & x\leq t\leq b.%
\end{array}%
\right.
\end{equation*}

The Riemann-Liouville fractional integral operator of order $\alpha \geq 0$
with $a\geq 0$ is defined by%
\begin{eqnarray}
J_{a}^{\alpha }f(x) &=&\frac{1}{\Gamma (\alpha )}\int\limits_{a}^{x}(x-t)^{%
\alpha -1}f(t)dt,  \label{a} \\
J_{a}^{0}f(x) &=&f(x).  \notag
\end{eqnarray}%
Recently, many authors have studied a number of inequalities by used the
Riemann-Liouville fractional integrals, see (\cite{Belarbi}-\cite{gorenflo}, 
\cite{sarikaya}, \cite{sarikaya1}) and the references cited therein. More
details, for necessary definitions and mathematical preliminaries of
fractional calculus theory, one can consult \cite{gorenflo}, \cite{samko}.

\section{Results}

\begin{theorem}
\label{3m} Let $f:\left[ a,b\right] \rightarrow \mathbb{R}$ be a
differentiable function on $\left[ a,b\right] $ such that $f^{\prime }\in
L_{p}\left[ a,b\right] $ with $\frac{1}{p}+\frac{1}{q}=1,\ p>1,$ and $\alpha
\geq 0$. Then, the following inequality holds:%
\begin{equation}
\left\vert \Gamma \left( \alpha +1\right) J_{a}^{\alpha }f\left( b\right)
-\left( b-a\right) ^{\alpha -1}\int\limits_{a}^{b}f\left( s\right)
ds\right\vert \leq \left( b-a\right) ^{\alpha +\frac{1}{q}}\left( \frac{1}{%
\left( \alpha q+1\right) ^{\frac{1}{q}}}+\frac{1}{\left( q+1\right) ^{\frac{1%
}{q}}}\right) \left\Vert f^{^{\prime }}\right\Vert _{p}.  \label{9}
\end{equation}
\end{theorem}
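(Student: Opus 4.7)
\bigskip

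\noindent\textbf{Proof sketch.} The plan is to couple the Montgomery identity \eqref{h} with the kernel $(b-t)^{\alpha-1}/\Gamma(\alpha)$ that defines the Riemann--Liouville integral, and then reduce the resulting double integral to a single integral against $f'(s)$ so that H\"older's inequality applies cleanly.

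First I would write \eqref{h} at the point $t\in[a,b]$, multiply both sides by $(b-t)^{\alpha-1}/\Gamma(\alpha)$, and integrate in $t$ from $a$ to $b$. The left-hand side gives exactly $J_a^{\alpha}f(b)$. The first term on the right produces
\[
\frac{1}{\Gamma(\alpha)(b-a)}\int_a^b f(s)\,ds\cdot\int_a^b(b-t)^{\alpha-1}dt
=\frac{(b-a)^{\alpha-1}}{\Gamma(\alpha+1)}\int_a^b f(s)\,ds,
\]
which, after multiplication by $\Gamma(\alpha+1)$, is precisely the quantity being subtracted in \eqref{9}.

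Next I would handle the remaining double integral $\alpha\int_a^b(b-t)^{\alpha-1}\int_a^b P(t,s)f'(s)\,ds\,dt$. Using Fubini and the piecewise definition \eqref{hh} of $P(t,s)$, I split into the regions $s<t$ and $s\ge t$ and compute the inner $t$-integrals in closed form; the two contributions combine to give the identity
\[
\Gamma(\alpha+1)J_a^{\alpha}f(b)-(b-a)^{\alpha-1}\int_a^b f(s)\,ds
=\int_a^b f'(s)\bigl[(b-s)^{\alpha}-(b-s)(b-a)^{\alpha-1}\bigr]ds.
\]
(As a sanity check, one can reach the same identity directly by a single integration by parts in the definition \eqref{a} of $J_a^{\alpha}f(b)$, together with the parallel identity $\int_a^b f(s)\,ds=(b-a)f(a)+\int_a^b(b-s)f'(s)\,ds$.)

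Finally I would apply the triangle inequality, noting that both $(b-s)^{\alpha}$ and $(b-s)(b-a)^{\alpha-1}$ are non-negative on $[a,b]$, and then H\"older's inequality with exponents $p,q$ to each of the two resulting integrals. Routine computation of $\int_a^b(b-s)^{\alpha q}ds=(b-a)^{\alpha q+1}/(\alpha q+1)$ and $\int_a^b(b-s)^q ds=(b-a)^{q+1}/(q+1)$ produces the two reciprocal factors in \eqref{9} with the desired common power $(b-a)^{\alpha+1/q}$. The main obstacle is purely computational, namely the bookkeeping in the Fubini step where the kernel changes sign and one must collect the algebraic simplification $(s-a)-(s-b)=b-a$ to see the cancellation that produces the clean right-hand side above; once that identity is in hand, the H\"older estimate is immediate.
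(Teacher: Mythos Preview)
Your proposal is correct and follows essentially the same route as the paper: substitute the Montgomery identity into $\Gamma(\alpha)J_a^{\alpha}f(b)=\int_a^b(b-t)^{\alpha-1}f(t)\,dt$, interchange the order of integration to reach the key identity
\[
\Gamma(\alpha+1)J_a^{\alpha}f(b)-(b-a)^{\alpha-1}\int_a^b f(s)\,ds
=\int_a^b(b-s)^{\alpha}f'(s)\,ds-(b-a)^{\alpha-1}\int_a^b(b-s)f'(s)\,ds,
\]
and then apply the triangle inequality followed by H\"older to each term. Your parenthetical sanity check via direct integration by parts is a nice addition not in the paper, but the main argument is the same.
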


\begin{proof}
We can write the Riemann-Liouville fractional integral operator as follows:%
\begin{equation}
\Gamma \left( \alpha \right) J_{a}^{\alpha }f\left( b\right)
=\int\limits_{a}^{b}\left( b-t\right) ^{\alpha -1}f\left( t\right) dt.
\label{1}
\end{equation}%
Thus, using Montgomery identity in (\ref{1}), we have%
\begin{eqnarray}
\Gamma \left( \alpha \right) J_{a}^{\alpha }f\left( b\right) 
&=&\int\limits_{a}^{b}\left( b-t\right) ^{\alpha -1}\left[ \frac{1}{b-a}%
\int\limits_{a}^{b}f\left( s\right) ds+\int\limits_{a}^{b}P\left( t,s\right)
f^{^{\prime }}\left( s\right) ds\right] dt  \notag \\
&&  \label{2} \\
&=&\frac{1}{b-a}\int\limits_{a}^{b}\left( b-t\right) ^{\alpha -1}\left[
\int\limits_{a}^{b}f\left( s\right) ds+\int\limits_{a}^{t}\left( s-a\right)
f^{^{\prime }}\left( s\right) ds+\int\limits_{t}^{b}\left( s-b\right)
f^{^{\prime }}\left( s\right) ds\right] dt.  \notag
\end{eqnarray}%
By an interchange of the order of integration, we get%
\begin{equation}
\int\limits_{a}^{b}\left( b-t\right) ^{\alpha -1}\left(
\int\limits_{a}^{b}f\left( s\right) ds\right) dt=\frac{\left( b-a\right)
^{\alpha }}{\alpha }\int\limits_{a}^{b}f\left( s\right) ds,  \label{3}
\end{equation}%
\begin{equation}
\int\limits_{a}^{b}\left( b-t\right) ^{\alpha -1}\left(
\int\limits_{a}^{t}\left( s-a\right) f^{^{\prime }}\left( s\right) ds\right)
dt=\frac{b-a}{\alpha }\int\limits_{a}^{b}\left( b-s\right) ^{\alpha
}f^{^{\prime }}\left( s\right) ds-\frac{1}{\alpha }\int\limits_{a}^{b}\left(
b-s\right) ^{\alpha +1}f^{^{\prime }}\left( s\right) ds,  \label{4}
\end{equation}%
\begin{equation}
\int\limits_{a}^{b}\left( b-t\right) ^{\alpha -1}\left(
\int\limits_{t}^{b}\left( s-b\right) f^{^{\prime }}\left( s\right) ds\right)
dt=\frac{1}{\alpha }\int\limits_{a}^{b}\left( b-s\right) ^{\alpha
+1}f^{^{\prime }}\left( s\right) ds-\frac{\left( b-a\right) ^{\alpha }}{%
\alpha }\int\limits_{a}^{b}\left( b-s\right) f^{^{\prime }}\left( s\right)
ds.  \label{5}
\end{equation}%
Thus, using (\ref{3}), (\ref{4}) and (\ref{5}) in (\ref{2}) we get 
\begin{eqnarray}
&&\Gamma \left( \alpha +1\right) J_{a}^{\alpha }f\left( b\right) -\left(
b-a\right) ^{\alpha -1}\int\limits_{a}^{b}f\left( s\right) ds  \notag \\
&&  \label{z} \\
&=&\int\limits_{a}^{b}\left( b-s\right) ^{\alpha }f^{^{\prime }}\left(
s\right) ds-\left( b-a\right) ^{\alpha -1}\int\limits_{a}^{b}\left(
b-s\right) f^{^{\prime }}\left( s\right) ds\text{, }\alpha \geq 0.  \notag
\end{eqnarray}%
By taking the modulus and applying H\"{o}lder inequality, we have%
\begin{eqnarray*}
&&\left\vert \Gamma \left( \alpha +1\right) J_{a}^{\alpha }f\left( b\right)
-\left( b-a\right) ^{\alpha -1}\int\limits_{a}^{b}f\left( s\right)
ds\right\vert  \\
&& \\
&\leq &\left( \int\limits_{a}^{b}\left\vert f^{^{\prime }}\left( s\right)
\right\vert ^{p}ds\right) ^{\frac{1}{p}}\left( \int\limits_{a}^{b}\left(
b-s\right) ^{\alpha q}ds\right) ^{\frac{1}{q}} \\
&& \\
&&+\left( b-a\right) ^{\alpha -1}\left( \int\limits_{a}^{b}\left\vert
f^{^{\prime }}\left( s\right) \right\vert ^{p}ds\right) ^{\frac{1}{p}}\left(
\int\limits_{a}^{b}\left( b-s\right) ^{q}ds\right) ^{\frac{1}{q}} \\
&& \\
&=&\left( b-a\right) ^{\alpha +\frac{1}{q}}\left( \frac{1}{\left( \alpha
q+1\right) ^{\frac{1}{q}}}+\frac{1}{\left( q+1\right) ^{\frac{1}{q}}}\right)
\left\Vert f^{^{\prime }}\right\Vert _{p}.
\end{eqnarray*}%
The proof is completed.
\end{proof}

\begin{corollary}
Under the assumptions Theorem \ref{3m} with $\alpha =0,$\ we have%
\begin{equation*}
\left\vert f\left( b\right) -\frac{1}{b-a}\int\limits_{a}^{b}f\left(
s\right) ds\right\vert \leq \left( b-a\right) ^{\frac{1}{q}}\left( 1+\frac{1%
}{\left( q+1\right) ^{\frac{1}{q}}}\right) \left\Vert f^{^{\prime
}}\right\Vert _{p}.
\end{equation*}
\end{corollary}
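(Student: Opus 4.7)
The plan is to obtain this corollary as a direct specialization of Theorem \ref{3m}: simply set $\alpha=0$ in inequality (\ref{9}) and simplify both sides. Since all hypotheses of Theorem \ref{3m} are inherited (the condition $\alpha\geq 0$ includes the boundary value $\alpha=0$), no independent derivation is required.

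For the left-hand side, I would use $\Gamma(\alpha+1)\big|_{\alpha=0}=\Gamma(1)=1$ together with the convention $J_a^{0}f(b)=f(b)$ from definition (\ref{a}), and observe that the factor $(b-a)^{\alpha-1}$ collapses to $(b-a)^{-1}$. This immediately yields
\[
\Gamma(\alpha+1)J_a^{\alpha}f(b)-(b-a)^{\alpha-1}\int_a^b f(s)\,ds
\;\longrightarrow\;
f(b)-\frac{1}{b-a}\int_a^b f(s)\,ds.
\]

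For the right-hand side, the exponent $\alpha+\tfrac{1}{q}$ becomes $\tfrac{1}{q}$, and the first denominator $(\alpha q+1)^{1/q}$ becomes $1^{1/q}=1$, so the parenthesized factor simplifies to $1+(q+1)^{-1/q}$. Combining these two observations with (\ref{9}) gives exactly the claimed inequality.

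There is essentially no obstacle here; the only thing to be careful about is confirming that the boundary case $\alpha=0$ is genuinely admissible in the statement of Theorem \ref{3m} (it is, since the hypothesis reads $\alpha\geq 0$) and that the two simplifications $\Gamma(1)=1$ and $J_a^{0}f(b)=f(b)$ are applied correctly. Hence the entire proof reduces to writing \emph{``Setting $\alpha=0$ in Theorem \ref{3m} and simplifying gives the result,''} possibly with the two substitutions displayed for clarity.
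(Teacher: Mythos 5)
Your proposal is correct and matches the paper's (implicit) argument exactly: the corollary is obtained simply by setting $\alpha=0$ in inequality (\ref{9}), using $\Gamma(1)=1$ and $J_a^{0}f(b)=f(b)$, and simplifying the right-hand side as you describe. Nothing further is required.
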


\begin{theorem}
\label{2m} Let $f:\left[ a,b\right] \rightarrow \mathbb{R}$ be a
differentiable function on $\left[ a,b\right] $ and $\left\vert f^{\prime
}\left( x\right) \right\vert \leq M$, for every $x\in $ $\left[ a,b\right] $
and $\alpha \geq 0$. Then the following inequality holds:%
\begin{equation}
\left\vert J_{a}^{\alpha }f\left( b\right) -\frac{\left( b-a\right) ^{\alpha
-1}}{\Gamma \left( \alpha +1\right) }\int\limits_{a}^{b}f\left( s\right)
ds\right\vert \leq \frac{M\left( \alpha +3\right) \left( b-a\right) ^{\alpha
+1}}{2\Gamma \left( \alpha +2\right) }.  \label{7}
\end{equation}
\end{theorem}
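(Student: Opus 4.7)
The plan is to reuse identity (\ref{z}) from the proof of Theorem \ref{3m}, which already expresses the quantity $\Gamma(\alpha+1)J_a^\alpha f(b)-(b-a)^{\alpha-1}\int_a^b f(s)\,ds$ purely in terms of $f'$. Dividing that identity through by $\Gamma(\alpha+1)$ converts the left-hand side into exactly the expression estimated in (\ref{7}), namely
\begin{equation*}
J_a^\alpha f(b)-\frac{(b-a)^{\alpha-1}}{\Gamma(\alpha+1)}\int_a^b f(s)\,ds=\frac{1}{\Gamma(\alpha+1)}\left[\int_a^b(b-s)^\alpha f'(s)\,ds-(b-a)^{\alpha-1}\int_a^b(b-s)f'(s)\,ds\right].
\end{equation*}

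Next I would take absolute values, pass them inside the two integrals, and apply the hypothesis $|f'(s)|\leq M$ pointwise. The weights $(b-s)^\alpha$ and $(b-s)$ are both nonnegative on $[a,b]$, so the estimate reduces to evaluating
\begin{equation*}
\int_a^b(b-s)^\alpha\,ds=\frac{(b-a)^{\alpha+1}}{\alpha+1},\qquad \int_a^b(b-s)\,ds=\frac{(b-a)^2}{2}.
\end{equation*}

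Substituting these into the bound gives $\dfrac{M}{\Gamma(\alpha+1)}\!\left[\dfrac{(b-a)^{\alpha+1}}{\alpha+1}+\dfrac{(b-a)^{\alpha+1}}{2}\right]$, and combining the two fractions produces the factor $\dfrac{\alpha+3}{2(\alpha+1)}$. Finally, the identity $(\alpha+1)\Gamma(\alpha+1)=\Gamma(\alpha+2)$ rewrites the denominator and delivers the claimed right-hand side $\dfrac{M(\alpha+3)(b-a)^{\alpha+1}}{2\,\Gamma(\alpha+2)}$.

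There is no real obstacle here: once identity (\ref{z}) is in hand, the proof is just a uniform bound $|f'|\leq M$ followed by two elementary integrations and one application of the functional equation for $\Gamma$. The only place to be careful is the algebraic combination of the two terms and the passage from $\Gamma(\alpha+1)$ to $\Gamma(\alpha+2)$ in the denominator.
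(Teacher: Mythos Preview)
Your proposal is correct and follows exactly the same route as the paper: the paper's proof simply invokes identity (\ref{z}), takes the modulus to obtain inequality (\ref{8}), and then applies $|f'(x)|\leq M$ without writing out the remaining arithmetic. You have merely filled in the elementary integrations and the passage from $\Gamma(\alpha+1)$ to $\Gamma(\alpha+2)$ that the paper leaves implicit.
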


\begin{proof}
By use the (\ref{z}), we have%
\begin{eqnarray}
&&\left\vert \Gamma \left( \alpha +1\right) J_{a}^{\alpha }f\left( b\right)
-\left( b-a\right) ^{\alpha -1}\int\limits_{a}^{b}f\left( s\right)
ds\right\vert  \notag \\
&&  \label{8} \\
&\leq &\int\limits_{a}^{b}\left( b-s\right) ^{\alpha }\left\vert f^{^{\prime
}}\left( s\right) \right\vert ds+\left( b-a\right) ^{\alpha
-1}\int\limits_{a}^{b}\left( b-s\right) \left\vert f^{^{\prime }}\left(
s\right) \right\vert ds.  \notag
\end{eqnarray}%
Since $\left\vert f^{^{\prime }}\left( x\right) \right\vert \leq M,$ we get
the required inequality which the proof is completed.
\end{proof}

\begin{corollary}
Under the assumptions Theorem \ref{2m} with $\alpha =0,$\ we have%
\begin{equation*}
\left\vert f\left( b\right) -\frac{1}{b-a}\int\limits_{a}^{b}f\left(
s\right) ds\right\vert \leq \frac{3\left( b-a\right) }{2}M.
\end{equation*}
\end{corollary}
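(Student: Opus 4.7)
The plan is to obtain the corollary purely by specializing Theorem \ref{2m} at $\alpha = 0$ and simplifying the Gamma and power factors; no fresh estimation is required since the theorem already provides the bound for every $\alpha \geq 0$.

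First I would rewrite the left-hand side of (\ref{7}) at $\alpha = 0$. By the very definition (\ref{a}) we have $J_{a}^{0}f(b)=f(b)$, and the prefactor becomes $\dfrac{(b-a)^{-1}}{\Gamma(1)}=\dfrac{1}{b-a}$, since $\Gamma(1)=1$. Thus the expression inside the absolute value on the left of (\ref{7}) collapses exactly to
\begin{equation*}
f(b)-\frac{1}{b-a}\int_{a}^{b}f(s)\,ds,
\end{equation*}
which matches the left-hand side of the corollary verbatim.

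Next I would simplify the right-hand side of (\ref{7}) at $\alpha=0$. Substituting gives $\dfrac{M(0+3)(b-a)^{0+1}}{2\,\Gamma(0+2)}=\dfrac{3M(b-a)}{2\,\Gamma(2)}=\dfrac{3(b-a)M}{2}$, again because $\Gamma(2)=1$. Combining the two simplifications yields precisely the stated inequality.

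The only point worth checking is that $\alpha=0$ is genuinely admissible: the hypothesis of Theorem \ref{2m} requires $\alpha\geq 0$, and each occurrence of $\alpha$ in (\ref{7}) is continuous at $0$ (the apparent singularity in $(b-a)^{\alpha-1}$ is cancelled against $J_{a}^{\alpha}f(b)$ via the original derivation, and the right-hand side is manifestly finite), so direct evaluation is legitimate. There is no substantive obstacle; the corollary is a one-line specialization.
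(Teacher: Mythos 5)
Your proof is correct and is exactly the intended route: the paper states this corollary as the direct specialization of Theorem \ref{2m} at $\alpha=0$, using $J_a^0 f(b)=f(b)$, $\Gamma(1)=\Gamma(2)=1$, and $(b-a)^{0+1}=b-a$, just as you do. Your closing worry about a ``singularity'' in $(b-a)^{\alpha-1}$ is unnecessary, since at $\alpha=0$ this is simply the finite constant $1/(b-a)$ for $b>a$.
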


\begin{theorem}
\label{4m} Let $w:\left[ a,b\right] \rightarrow \mathbb{[}0,\infty )$ be a
probability density function, i.e. $\int_{a}^{b}w\left( t\right) dt=1$, and
set $W\left( t\right) =\int_{a}^{t}w\left( x\right) dx$ for $a\leq t\leq b$, 
$W\left( t\right) =0$ for $t<a$ and $W\left( t\right) =1$\ for $t>b$. Let $f:%
\left[ a,b\right] \rightarrow \mathbb{R}$ be a differentiable function on $%
\left[ a,b\right] $ such that $f^{\prime }\in L_{p}\left[ a,b\right] $ with $%
\frac{1}{p}+\frac{1}{q}=1,\ p>1,$ and $\alpha \geq 0$. Then the following
inequality holds:%
\begin{eqnarray}
&&\left\vert \Gamma \left( \alpha +1\right) J_{a}^{\alpha }f\left( b\right)
-\left( b-a\right) ^{\alpha }\int\limits_{a}^{b}w\left( s\right) f\left(
s\right) ds\right\vert  \label{16} \\
&\leq &\left\Vert f^{^{\prime }}\right\Vert _{p}\left( b-a\right) ^{\alpha } 
\left[ \left( \int\limits_{a}^{b}\left\vert W(s)-1\right\vert ^{q}ds\right)
^{\frac{1}{q}}+\left( \frac{b-a}{\alpha q+1}\right) ^{\frac{1}{q}}\right] . 
\notag
\end{eqnarray}
\end{theorem}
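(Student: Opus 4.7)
The plan is to mirror the proof of Theorem~\ref{3m}, but substitute the weighted Montgomery identity (\ref{hhh}) in place of the unweighted one (\ref{h}). First I would start from the fractional-integral identity
\[
\Gamma(\alpha)\,J_{a}^{\alpha}f(b)=\int_{a}^{b}(b-t)^{\alpha-1}f(t)\,dt
\]
already used in (\ref{1}), and then replace $f(t)$ inside by its weighted Montgomery representation
\[
f(t)=\int_{a}^{b}w(s)f(s)\,ds+\int_{a}^{b}P_{w}(t,s)f'(s)\,ds .
\]
The $w$-weighted average is independent of $t$, so the outer $\int_{a}^{b}(b-t)^{\alpha-1}\,dt=(b-a)^{\alpha}/\alpha$ factors cleanly and produces the term $(b-a)^{\alpha}\int_{a}^{b}w(s)f(s)\,ds$ on the left-hand side of (\ref{16}) once we multiply by $\alpha$.

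Next I would handle the Peano-kernel piece by splitting
\[
\int_{a}^{b}P_{w}(t,s)f'(s)\,ds=\int_{a}^{t}W(s)f'(s)\,ds+\int_{t}^{b}\bigl(W(s)-1\bigr)f'(s)\,ds
\]
and swapping the order of integration in each double integral, exactly as in (\ref{4})–(\ref{5}). For the first piece $t$ ranges over $[s,b]$ giving $\int_{s}^{b}(b-t)^{\alpha-1}\,dt=(b-s)^{\alpha}/\alpha$; for the second $t$ ranges over $[a,s]$ giving $\bigl[(b-a)^{\alpha}-(b-s)^{\alpha}\bigr]/\alpha$. The key observation is that the $W(s)(b-s)^{\alpha}$ contribution from the first integral cancels with the $-W(s)(b-s)^{\alpha}$ piece extracted from the second, leaving the compact identity
\[
\Gamma(\alpha+1)J_{a}^{\alpha}f(b)-(b-a)^{\alpha}\!\int_{a}^{b}\!w(s)f(s)\,ds=\int_{a}^{b}(b-s)^{\alpha}f'(s)\,ds+(b-a)^{\alpha}\!\int_{a}^{b}\!\bigl(W(s)-1\bigr)f'(s)\,ds .
\]
This cancellation is the one place where a careless bookkeeping would leave an unwanted $|W(s)|^{q}$ term and destroy the shape of the stated bound, so it is the main technical step to get right.

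Finally, taking absolute values, applying the triangle inequality and H\"older's inequality with exponents $p,q$ to each of the two integrals on the right, one gets
\[
\left(\int_{a}^{b}(b-s)^{\alpha q}\,ds\right)^{1/q}\!\!\|f'\|_{p}+(b-a)^{\alpha}\!\left(\int_{a}^{b}|W(s)-1|^{q}\,ds\right)^{1/q}\!\!\|f'\|_{p},
\]
and evaluating the first integral as $(b-a)^{\alpha q+1}/(\alpha q+1)$ and factoring out $(b-a)^{\alpha}\|f'\|_{p}$ yields exactly (\ref{16}). Apart from the Fubini cancellation, every remaining step is a direct transcription of the calculations already performed in the proof of Theorem~\ref{3m}.
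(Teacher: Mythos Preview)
Your proposal is correct and follows essentially the same route as the paper: substitute the weighted Montgomery identity into $\Gamma(\alpha)J_{a}^{\alpha}f(b)=\int_{a}^{b}(b-t)^{\alpha-1}f(t)\,dt$, interchange the order of integration in the two Peano-kernel pieces, obtain the identity (labeled (\ref{z1}) in the paper), and then apply the triangle and H\"older inequalities. Your explicit remark about the cancellation of the $W(s)(b-s)^{\alpha}$ terms is in fact clearer than the paper's presentation, where the intermediate formula (\ref{13}) is stated in an already-simplified form that obscures exactly this cancellation.
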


\begin{proof}
By using (\ref{hhh}) in (\ref{1}), we have%
\begin{eqnarray}
\Gamma \left( \alpha \right) J_{a}^{\alpha }f\left( b\right)
&=&\int\limits_{a}^{b}\left( b-t\right) ^{\alpha -1}\left[
\int\limits_{a}^{b}w\left( s\right) f\left( s\right)
ds+\int\limits_{a}^{b}P_{w}\left( t,s\right) f^{^{\prime }}\left( s\right) ds%
\right] dt  \notag \\
&&  \notag \\
&=&\int\limits_{a}^{b}\left( b-t\right) ^{\alpha -1}\left(
\int\limits_{a}^{b}w\left( s\right) f\left( s\right) ds\right) dt  \label{y}
\\
&&+\int\limits_{a}^{b}\left( b-t\right) ^{\alpha -1}\left(
\int\limits_{a}^{t}W\left( s\right) f^{^{\prime }}\left( s\right) ds\right)
dt  \notag \\
&&+\int\limits_{a}^{b}\left( b-t\right) ^{\alpha -1}\left(
\int\limits_{t}^{b}\left( W\left( s\right) -1\right) f^{^{\prime }}\left(
s\right) ds\right) dt.  \notag
\end{eqnarray}%
By an interchange of the order of integration, we get%
\begin{equation}
\int\limits_{a}^{b}\left( b-t\right) ^{\alpha -1}\left(
\int\limits_{a}^{b}w\left( s\right) f\left( s\right) ds\right) dt=\frac{%
\left( b-a\right) ^{\alpha }}{\alpha }\int\limits_{a}^{b}w\left( s\right)
f\left( s\right) ds,  \label{11}
\end{equation}%
\begin{equation}
\int\limits_{a}^{b}\left( b-t\right) ^{\alpha -1}\left(
\int\limits_{a}^{t}W\left( s\right) f^{^{\prime }}\left( s\right) ds\right)
dt=\frac{1}{\alpha }\int\limits_{a}^{b}\left( b-s\right) ^{\alpha }W\left(
s\right) f^{^{\prime }}\left( s\right) ds,  \label{120}
\end{equation}%
and%
\begin{eqnarray}
&&\int\limits_{a}^{b}\left( b-t\right) ^{\alpha -1}\left(
\int\limits_{t}^{b}\left( W\left( s\right) -1\right) f^{^{\prime }}\left(
s\right) ds\right) dt  \label{13} \\
&=&\frac{1}{\alpha }\left[ \left( b-a\right) ^{\alpha }\int\limits_{a}^{b}%
\left[ W(s)-1\right] f^{^{\prime }}\left( s\right)
ds+\int\limits_{a}^{b}\left( b-s\right) ^{\alpha }f^{^{\prime }}\left(
s\right) ds\right] .  \notag
\end{eqnarray}%
Thus, using (\ref{11}), (\ref{120}) and (\ref{13}) in (\ref{y}) we get 
\begin{eqnarray}
&&\Gamma \left( \alpha +1\right) J_{a}^{\alpha }f\left( b\right) -\left(
b-a\right) ^{\alpha }\int\limits_{a}^{b}w\left( s\right) f\left( s\right) ds
\label{z1} \\
&=&\left( b-a\right) ^{\alpha }\int\limits_{a}^{b}\left[ W(s)-1\right]
f^{^{\prime }}\left( s\right) ds+\int\limits_{a}^{b}\left( b-s\right)
^{\alpha }f^{^{\prime }}\left( s\right) ds.  \notag
\end{eqnarray}%
By taking the modulus and applying H\"{o}lder inequality, we have%
\begin{eqnarray*}
&&\left\vert \Gamma \left( \alpha +1\right) J_{a}^{\alpha }f\left( b\right)
-\left( b-a\right) ^{\alpha }\int\limits_{a}^{b}w\left( s\right) f\left(
s\right) ds\right\vert \\
&& \\
&\leq &\left( b-a\right) ^{\alpha }\left( \int\limits_{a}^{b}\left\vert
f^{^{\prime }}\left( s\right) \right\vert ^{p}ds\right) ^{\frac{1}{p}}\left(
\int\limits_{a}^{b}\left\vert W(s)-1\right\vert ^{q}ds\right) ^{\frac{1}{q}}
\\
&& \\
&&+\left( \int\limits_{a}^{b}\left\vert f^{^{\prime }}\left( s\right)
\right\vert ^{p}ds\right) ^{\frac{1}{p}}\left( \int\limits_{a}^{b}\left(
b-s\right) ^{\alpha q}ds\right) ^{\frac{1}{q}} \\
&& \\
&=&\left\Vert f^{^{\prime }}\right\Vert _{p}\left( b-a\right) ^{\alpha } 
\left[ \left( \int\limits_{a}^{b}\left\vert W(s)-1\right\vert ^{q}ds\right)
^{\frac{1}{q}}+\left( \frac{b-a}{\alpha q+1}\right) ^{\frac{1}{q}}\right]
\end{eqnarray*}%
which the proof is completed.
\end{proof}

\begin{corollary}
Under the assumptions Theorem \ref{4m} with $\alpha =0,$\ we have%
\begin{equation*}
\left\vert f\left( b\right) -\int\limits_{a}^{b}w\left( s\right) f\left(
s\right) ds\right\vert \leq \left[ \left( \int\limits_{a}^{b}\left\vert
W(s)-1\right\vert ^{q}ds\right) ^{\frac{1}{q}}+\left( b-a\right) ^{\frac{1}{q%
}}\right] \left\Vert f^{^{\prime }}\right\Vert _{p}.
\end{equation*}
\end{corollary}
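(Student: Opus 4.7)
The plan is to obtain this corollary as a direct specialization of Theorem~\ref{4m} by setting $\alpha=0$, so essentially no new work is required beyond tracking what each factor in (\ref{16}) becomes at $\alpha=0$. First I would verify that the hypotheses of Theorem~\ref{4m} are inherited: $w$ is still the probability density function with cumulative $W$, $f$ is still differentiable with $f'\in L_p[a,b]$ and $1/p+1/q=1$, $p>1$, and the constraint $\alpha\geq 0$ is satisfied by $\alpha=0$. So (\ref{16}) applies.

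Next I would simplify the left-hand side of (\ref{16}) at $\alpha=0$. Using $\Gamma(\alpha+1)=\Gamma(1)=1$, $J_a^{0}f(b)=f(b)$ from the definition (\ref{a}), and $(b-a)^{\alpha}=(b-a)^{0}=1$, the quantity $\Gamma(\alpha+1)J_a^{\alpha}f(b)-(b-a)^{\alpha}\int_a^b w(s)f(s)\,ds$ collapses exactly to $f(b)-\int_a^b w(s)f(s)\,ds$, which matches the left-hand side of the corollary.

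For the right-hand side, the prefactor $(b-a)^{\alpha}=1$ drops out, the first bracket term $\bigl(\int_a^b|W(s)-1|^q\,ds\bigr)^{1/q}$ is unchanged, and the second bracket term becomes $\bigl(\tfrac{b-a}{\alpha q+1}\bigr)^{1/q}=(b-a)^{1/q}$ because $\alpha q+1=1$ at $\alpha=0$. Combining the two sides yields the stated inequality.

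The only step that requires even a moment of care is checking $J_a^{0}f(b)=f(b)$, which is built into the definition (\ref{a}) as the convention $J_a^{0}f(x)=f(x)$, so no limit argument is needed. There is no real obstacle here; the corollary is a clean substitution into Theorem~\ref{4m}.
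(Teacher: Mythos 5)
Your proof is correct and is exactly the argument the paper intends: the corollary follows from Theorem \ref{4m} by substituting $\alpha=0$, using $\Gamma(1)=1$, $J_a^0 f(b)=f(b)$, $(b-a)^0=1$, and $\alpha q+1=1$. No further comment is needed.
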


\begin{theorem}
\label{5m} Let $w:\left[ a,b\right] \rightarrow \mathbb{[}0,\infty )$ be a
probability density function, i.e. $\int_{a}^{b}w\left( t\right) dt=1$, and
set $W\left( t\right) =\int_{a}^{t}w\left( x\right) dx$ for $a\leq t\leq b$, 
$W\left( t\right) =0$ for $t<a$ and $W\left( t\right) =1$\ for $t>b$. Let $f:%
\left[ a,b\right] \rightarrow \mathbb{R}$ be a differentiable function on $%
\left[ a,b\right] $ and $\left\vert f^{\prime }\left( x\right) \right\vert
\leq M$, for every $x\in $ $\left[ a,b\right] $ and $\alpha \geq 0$. Then
the following inequality holds: 
\begin{equation}
\left\vert \Gamma \left( \alpha +1\right) J_{a}^{\alpha }f\left( b\right)
-\left( b-a\right) ^{\alpha }\int\limits_{a}^{b}w\left( s\right) f\left(
s\right) ds\right\vert \leq M\left( b-a\right) ^{\alpha }\left(
\int\limits_{a}^{b}\left\vert W(s)-1\right\vert ds-\frac{b-a}{\alpha +1}%
\right) .  \label{14}
\end{equation}
\end{theorem}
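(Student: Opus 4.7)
The plan is to mimic exactly the template used for Theorem \ref{2m}, but working from the weighted identity rather than the unweighted one. All the heavy lifting (the triple interchange of order of integration) has already been done in the proof of Theorem \ref{4m}, which culminated in identity (\ref{z1}); so the present argument should reduce to a modulus estimate plus the pointwise bound $|f'|\le M$, exactly paralleling how the proof of Theorem \ref{2m} read off (\ref{7}) from (\ref{z}).

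Concretely, I would start from the identity
\begin{equation*}
\Gamma(\alpha+1)J_a^\alpha f(b) - (b-a)^\alpha\!\int_a^b w(s)f(s)\,ds = (b-a)^\alpha\!\int_a^b[W(s)-1]f'(s)\,ds + \int_a^b(b-s)^\alpha f'(s)\,ds,
\end{equation*}
take absolute values, and apply the triangle inequality. The first integral is then dominated by $(b-a)^\alpha M\int_a^b|W(s)-1|\,ds$, while the second is dominated by $M\int_a^b(b-s)^\alpha\,ds = M(b-a)^{\alpha+1}/(\alpha+1)$. Factoring $M(b-a)^\alpha$ out of both terms collects everything into $M(b-a)^\alpha\bigl(\int_a^b|W(s)-1|\,ds + \tfrac{b-a}{\alpha+1}\bigr)$.

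The main obstacle is that the bound produced by this (seemingly unavoidable) triangle-inequality step is a \emph{sum}, whereas (\ref{14}) as printed features a \emph{difference}. This is not a gap in the argument above but a genuine problem with the statement: since $W\le 1$ on $[a,b]$, integration by parts gives $\int_a^b|W(s)-1|\,ds = \int_a^b(1-W(s))\,ds = \int_a^b s\,w(s)\,ds - a$, which for uniform $w$ equals $(b-a)/2$. Taking $\alpha=0$ and uniform $w$ makes the printed right-hand side $M(b-a)^0\bigl((b-a)/2 - (b-a)\bigr) = -M(b-a)/2 < 0$, so the stated inequality cannot hold literally; the ``$-$'' must be a typographical slip for ``$+$''. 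I would therefore present the proof for the ``$+$'' version, where the three steps outlined above (identity, triangle inequality, pointwise bound on $f'$) go through verbatim and require no further estimation.
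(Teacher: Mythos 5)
Your proof follows exactly the paper's own route: start from the identity (\ref{z1}) established in the proof of Theorem \ref{4m}, take the modulus, apply the triangle inequality to obtain (\ref{15}), and then insert the bound $\left\vert f^{\prime }\right\vert \leq M$, which yields $M\left( b-a\right) ^{\alpha }\left( \int_{a}^{b}\left\vert W(s)-1\right\vert ds+\frac{b-a}{\alpha +1}\right)$ with a plus sign. You are also right that the minus sign in (\ref{14}) as printed is a typographical error rather than a flaw in your argument: your uniform-density, $\alpha =0$ check showing the printed right-hand side is negative is valid, and the paper's own final step, if written out, produces the same ``$+$'' that you obtain.
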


\begin{proof}
From (\ref{z1}), we have%
\begin{eqnarray}
&&\left\vert \Gamma \left( \alpha +1\right) J_{a}^{\alpha }f\left( b\right)
-\left( b-a\right) ^{\alpha }\int\limits_{a}^{b}w\left( s\right) f\left(
s\right) ds\right\vert  \notag \\
&&  \label{15} \\
&\leq &\left( b-a\right) ^{\alpha }\int\limits_{a}^{b}\left\vert
W(s)-1\right\vert \left\vert f^{^{\prime }}\left( s\right) \right\vert
ds+\int\limits_{a}^{b}\left( b-s\right) ^{\alpha }\left\vert f^{^{\prime
}}\left( s\right) \right\vert ds.  \notag
\end{eqnarray}%
By using $\left\vert f^{^{\prime }}\left( x\right) \right\vert \leq M$, the
proof is completed.
\end{proof}

\end{document}